\documentclass[12pt]{amsart}

\usepackage{amsmath,amsfonts,amssymb,amsthm}
\usepackage{pinlabel}
\usepackage{graphicx}
\usepackage{mathtools}
\usepackage[all]{xy}
\usepackage{hyperref}
\usepackage[usenames,dvipsnames]{color}
\usepackage{color}
\xyoption{dvips}

\numberwithin{equation}{section}

\newtheorem{thm}{Theorem}[section]

\newtheorem{lem}[thm]{Lemma}

\theoremstyle{remark}

\theoremstyle{definition}

\theoremstyle{remark}
\newtheorem{remark}[thm]{Remark}
\theoremstyle{property}

\newcommand{\R}{\mathbb{R}}

\newcommand{\Z}{\mathbb{Z}}

\newcommand{\cH}{\mathcal H}
\newcommand{\cL}{\mathcal L}

\newcommand{\cA}{\mathcal A}
\newcommand{\cQ}{\mathcal Q}

\topmargin.5in \textheight8.2in \textwidth6.5in \oddsidemargin0in
\evensidemargin0in

\begin{document}

\title{A note on infinite number of exact Lagrangian fillings  for spherical spuns}

\author{Roman Golovko}

\begin{abstract}
In this short note we discuss high-dimensional examples of Legendrian submanifolds of the standard contact Euclidean space with an infinite number of exact Lagrangian fillings up to Hamiltonian isotopy.  They are obtained from the examples of Casals and Ng by applying to them the spherical spinning construction.
\end{abstract}

\address{Faculty of Mathematics and Physics, Charles University,
Prague, Czech Republic} \email{golovko@karlin.mff.cuni.cz}
\date{\today}
\thanks{}
\subjclass[2010]{Primary 53D12; Secondary 53D42}

\keywords{}

\maketitle

\section{Introduction}

Recently the question of existence of infinitely many exact Lagrangian fillings for Legendrian links has received a certain amount of interest. First it has been positively answered by Casals and Gao \cite{InfinitenumberfillCaGa}.
Later  the  works of An-Bae-Lee \cite{AnBaeLeeLagrfilfinnsft, AnBaeLeeLagrfilAFFsft}, Casals-Zaslow \cite{CasalsZaslowLegwea}, and  Gao-Shen-Weng \cite{GaoShenWenginfa, GaoShenWenginfb} have continued to develop various cluster and sheaf-theoretic
methods to detect infinitely many exact Lagrangian fillings for Legendrian links in the standard 3-dimensional contact vector space.

In \cite{CasalsNginfinitenumbeoffillings} Casals and Ng following the ideas of K\'{a}lm\'{a}n \cite{Kalmanconthom} have provided the first  series of Legendrian links in $(\R^3, \xi_{st})$
with the property that the Chekanov-Eliashberg algebra detects infinitely many exact Lagrangian fillings.
We show that spherical spinning construction applied to the examples of Casals and Ng leads to examples of Legendrian submanifolds of high-dimensional contact vector space with infinite number of pairwise Hamiltonian non-isotopic exact Lagrangian  fillings. More precisely, using Chekanov-Eliashberg algebras we prove the following:

\begin{thm}
\label{thminfinitenumberoffillings}
For a given $m\geq 1$ and $k_i\geq 2$, where $i=1,\dots,m$, there is a  Legendrian submanifold $\Lambda$ in the standard contact vector space  $\R^{2(k_1+\dots + k_m+1)+1}_{st}$ diffeomorphic to the disjoint union  of some number of $S^1\times S^{k_1}\times \cdots \times S^{k_m}$ which admits an infinite number of
exact Lagrangian fillings distinct up to Hamiltonian isotopy.
\end{thm}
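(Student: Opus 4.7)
The strategy is to transfer the infinite families of fillings from the Casals--Ng Legendrian links in $(\R^3,\xi_{st})$ to high dimensions by iterated spherical spinning, and to use the Chekanov--Eliashberg DGA to separate Hamiltonian isotopy classes of the spun fillings. Concretely, I would take a Casals--Ng Legendrian link $\Lambda_0\subset(\R^3,\xi_{st})$ from \cite{CasalsNginfinitenumbeoffillings} together with an infinite sequence $\{L_j\}_{j\in\N}$ of exact Lagrangian fillings whose induced augmentations of the Chekanov--Eliashberg algebra $\A(\Lambda_0)$ are pairwise inequivalent, and then form
\[
\Lambda\ :=\ \Sigma^{k_m}\cdots\Sigma^{k_1}\Lambda_0\ \subset\ \R^{2(1+k_1+\cdots+k_m)+1}_{st}
\]
by applying the $S^{k_i}$-spinning construction $m$ times in succession. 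A direct inspection of the construction shows that $\Lambda$ is diffeomorphic to a disjoint union of copies of $S^1\times S^{k_1}\times\cdots\times S^{k_m}$, one for each component of $\Lambda_0$, and that each $L_j$ spins to an exact Lagrangian filling $\Sigma L_j$ of $\Lambda$.

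The heart of the argument is to verify that spherical spinning preserves the capacity of the DGA-augmentation invariant to distinguish fillings. For this I would appeal to the compatibility of the Chekanov--Eliashberg DGA with $S^k$-spinning for $k\geq 2$: the DGA $\A(\Sigma^{k}\Lambda)$ is stable-tame equivalent to a graded lift of $\A(\Lambda)$ whose generators are in canonical bijection with those of $\Lambda$ (with appropriately shifted degrees), in such a way that the augmentation induced by a spun filling $\Sigma L$ restricts to the augmentation induced by $L$. Iterating this identification along the tower of spinnings, two spun fillings $\Sigma L_i$ and $\Sigma L_j$ of $\Lambda$ can be Hamiltonian isotopic only if the corresponding augmentations of $\A(\Lambda_0)$ are DGA-homotopic, which holds for only finitely many pairs $(i,j)$ by the Casals--Ng result. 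Hence $\Lambda$ inherits infinitely many pairwise Hamiltonian non-isotopic exact Lagrangian fillings, which gives the theorem.

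The main obstacle is to pin down this DGA-level compatibility precisely in the higher spin dimensions $k_i\geq 2$ and, crucially, to check that arbitrary Hamiltonian isotopies of the spun fillings---which are not required to respect the rotational symmetry of the spinning---nonetheless descend to DGA-homotopies of the induced augmentations on $\A(\Lambda_0)$. Once this functoriality of augmentation equivalence classes under spherical spinning is established, the infinite-distinctness of the Casals--Ng augmentations transports directly to the spun fillings, with no further computation needed on the high-dimensional side.
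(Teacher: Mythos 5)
Your overall strategy coincides with the paper's: spin a Casals--Ng link iteratively and use induced augmentations of the Chekanov--Eliashberg algebra to separate the spun fillings. But the step you defer --- ``the augmentation induced by a spun filling $\Sigma L$ restricts to the augmentation induced by $L$'' --- is precisely the technical heart of the argument, and you give neither a proof nor a citation where it is established; no such off-the-shelf statement exists. The paper's main lemma proves exactly this: it passes from the filling $L$ to a conical Legendrian cobordism $\hat{L}$ and then to a Morse cobordism $\overline{L}_0$ in the sense of Pan--Rutherford, uses their formula $d(\hat{x})=\varepsilon_L(x)+n(d(\hat{x}))$ to read off the induced augmentation from the DGA of $\overline{L}_0$, checks that spinning commutes with this construction ($\Sigma_{S^k}\overline{L}_0=\overline{\Sigma_{S^k}L}_0$, because the contactomorphism to the $1$-jet space preserves the coordinate along which one spins), and invokes Ekholm's flow-tree/holomorphic-disk correspondence (valid here since the spun fronts only exhibit the singularities already present in the two-dimensional case) to conclude $\varepsilon_{\Sigma_{S^k}L}(x_S)=\varepsilon_L(x)$ and $\varepsilon_{\Sigma_{S^k}L}(x_N)=0$. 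Without this computation your argument reduces to an unproved compatibility assertion, so as written there is a genuine gap.

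Two smaller inaccuracies in how you set up the algebra: the DGA of the spun is not ``stable-tame equivalent to a graded lift of $\cA(\Lambda)$ with generators in canonical bijection with those of $\Lambda$''; its Reeb chords form two copies of $\cQ(\Lambda)$, one grading-preserving and one shifted up by $k$, and the relevant structure is a DGA inclusion $i\colon\cA(\Lambda)\hookrightarrow\cA(\Sigma_{S^k}\Lambda)$ left-inverted by the quotient killing the shifted chords. It is because $k\ge 2$ (so the extra chords have positive degree, the algebra stays in nonnegative degree, and DGA homotopy of augmentations is equality) and because $H_1(\Sigma_{S^k}L)\simeq H_1(L)$ by K\"unneth (so the coefficient rings match) that augmentation sets are identified and the Casals--Ng distinctness transports. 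Also, your worry about Hamiltonian isotopies not respecting the rotational symmetry is a non-issue once the lemma above is in place: one applies the Ekholm--Honda--K\'alm\'an/Karlsson invariance directly to the high-dimensional fillings, obtains equality of their augmentations up to an isomorphism of $\Z[H_1]$, and only then pulls back along $i$ to contradict the distinctness of $\varepsilon_L$ and $\varepsilon_{L'}$; nothing needs to ``descend'' at the level of isotopies.
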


\section{Background}
\subsection{Exact Lagrangian cobordisms}
We first recall the notions of exact Lagrangian cobordism and exact Lagrangian filling.

Let $\Lambda^{-}$ and $\Lambda^{+}$ be two Legendrian submanifolds of  the standard contact vector space $\R^{2n+1}_{st}:=(\R^{2n+1}, \alpha_{st}=dz-\sum y_i dx_i)$. We say that $\Lambda^{-}$ is exact Lagrangian cobordant to $\Lambda^{+}$ if there is a smooth cobordism $(L; \Lambda^{-},\Lambda^{+})$ and an exact Lagrangian embedding
$L\hookrightarrow S(\R^{2n+1}_{st})$, where $S(\R^{2n+1}_{st}):=(\R\times \R^{2n+1},  d(e^t\alpha_{st}))$ and $t$ is the coordinate on the first $\R$-factor, satisfying the following conditions:
\begin{itemize}
\item $L|_{(-\infty, -T)\times \R^{2n+1}_{st}}= (-\infty, -T)\times \Lambda^{-}$ and
$L|_{(T,\infty)\times \R^{2n+1}_{st}}= (T,\infty)\times \Lambda^{+}$ for some $T\gg 0$,
\item $L^{c}:=L|_{[-T,T]\times \R^{2n+1}_{st}}$ is compact.
\item There exists $f:L\to \R$ such that
$e^{t}\alpha_{st}|_{L} = df$ and $f|_{(-\infty,-T)\times \Lambda^{-}}$, $f|_{(T,\infty)\times \Lambda^{+}}$ are constant functions.
\end{itemize}
If $L$ is an exact
Lagrangian cobordism with empty negative end and whose positive end is equal to $\Lambda$, then we say that $L$ is an exact Lagrangian filling of $\Lambda$.

Given a closed, spin Legendrian  submanifold $\Lambda\subset \R^{2n+1}_{st}$
and an exact Lagrangian filling  $L\subset S(\R^{2n+1})$ of $\Lambda$,
following Ekholm, Honda, and K\'{a}lm\'{a}n \cite{EkhomHonaKalmancobordisms} observe that  $L$ induces an augmentation of the Chekanov-Eliashberg algebra  $(\cA(\Lambda), \Z_2[H_1(L)])$ onto  $\Z_2[H_1(L)]$. Here following the observation of Karlsson from \cite[Section 2.2]{KarlssonCorOrientCobordisms} we note that even though the original results of Ekholm, Honda, and K\'{a}lm\'{a}n from \cite[Section 3.5]{EkhomHonaKalmancobordisms} are formulated for $n=1$, tracing their proofs one sees that they can be
extended word-by-word to arbitrary $n$. Finally, following the work of Karlsson \cite{KarlssonCorOrientCobordisms}, one can extend the augmentation to be onto  $\Z[H_1(L)]$.

\begin{thm}[\cite{EkhomHonaKalmancobordisms, KarlssonCorOrientCobordisms}]
Let $L$ be a spin Maslov number $0$  exact Lagrangian filling  of closed spin Legendrian $\Lambda \subset \R^{2n+1}_{\xi_{st}}$ with Maslov number $0$.
Then $L$ induces an augmentation
$$
\varepsilon_L : (\cA(\Lambda), \Z[H_1(L)]) \to \Z[H_1(L)]
$$
where $\Z[H_1(L)]$ lies entirely in grading $0$.
 In addition, if $L$ and $L'$ are exact Lagrangian fillings of $\Lambda$ which are isotopic through exact Lagrangian fillings of $\Lambda$, then there is a DGA homotopy between corresponding augmentations $\varepsilon_L$ and $\varepsilon_{L'}$.
\end{thm}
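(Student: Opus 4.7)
The plan is to construct $\varepsilon_L$ by counting rigid $J$-holomorphic discs with boundary on $L$, carrying their boundary homology class as the $\Z[H_1(L)]$-label, and then use parametric moduli spaces for the homotopy statement. First I would complete $L$ to a proper exact Lagrangian in the symplectization $S(\R^{2n+1}_{st})$ by attaching the positive cylindrical end $[T,\infty)\times \Lambda$, choose a cylindrical compatible almost complex structure $J$ adapted to $\Lambda$, and for each Reeb chord generator $a$ of $\cA(\Lambda)$ consider the moduli space $\cM(a;L,J)$ of $J$-holomorphic discs with one positive puncture asymptotic to $a$ and boundary on $L$. Exactness of $L$ (together with the cylindrical model near infinity) gives the a priori energy bound making Gromov/SFT compactness available, and the Maslov $0$ hypothesis ensures that the expected dimension formula places rigid discs exactly at $|a|=0$, so the count
\[
\varepsilon_L(a) \;=\; \sum_{u\in \cM(a;L,J)} \mathrm{sgn}(u)\, [\partial u] \;\in\; \Z[H_1(L)]
\]
defines a degree-$0$ unital algebra map in which $\Z[H_1(L)]$ sits entirely in grading $0$.

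Next I would verify the chain-map property $\varepsilon_L\circ\partial=0$ by examining the SFT compactification of the $1$-dimensional moduli spaces $\cM(a;L,J)$ with $|a|=1$. Their boundary consists of two-level configurations: a rigid disc in the symplectization of $\Lambda$ with one positive puncture at $a$ and several negative punctures at chords $b_1,\dots,b_k$ (these are exactly the words appearing in $\partial a$), glued to rigid discs on $L$ filling off the $b_i$. Signed count then yields $\varepsilon_L(\partial a)=0$, and tracking boundary homology classes through the gluing shows that the $H_1(L)$-coefficients multiply correctly. This is the Ekholm--Honda--K\'alm\'an argument, and since their Sections~3.4--3.5 of \cite{EkhomHonaKalmancobordisms} never really use $n=1$, it extends verbatim; Karlsson's coherent orientation scheme from \cite{KarlssonCorOrientCobordisms} is what promotes the $\Z_2$-count to a well-defined $\Z$-count.

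For the second statement, I would use a generic path $\{L_s\}_{s\in[0,1]}$ of exact Lagrangian fillings interpolating between $L_0=L$ and $L_1=L'$, together with a generic path of almost complex structures $\{J_s\}$. The parametric moduli space $\bigcup_s \cM(a;L_s,J_s)$ of dimension $|a|+1$, restricted to $|a|=-1$, produces zero-dimensional parametric moduli that define a degree $+1$ map $K\co \cA(\Lambda)\to \Z[H_1(L)]$ (after identifying $H_1(L_s)\cong H_1(L)$ along the isotopy). Analyzing its $1$-dimensional parametric companion and its boundary in the usual SFT way gives the DGA homotopy identity $\varepsilon_{L'}-\varepsilon_L = K\circ\partial + \text{(coming from algebra structure)}$, which is the DGA-homotopy relation.

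The main obstacle is the orientation/sign bookkeeping. Counting with $\Z_2$-coefficients and tracking only $H_1(L)$ modulo $2$-torsion would be essentially formal from \cite{EkhomHonaKalmancobordisms}; the delicate point is that upgrading to $\Z[H_1(L)]$ requires a coherent orientation on every parametric moduli space, compatible with gluing, capping operators, and the $H_1(L)$-action. Verifying that Karlsson's conventions propagate consistently through the parametric setup used for the homotopy statement (and through the cylindrical-end models for all chord multiplicities) is where the bulk of the technical work resides; once this is in place, the remaining arguments are standard SFT compactness-and-gluing computations.
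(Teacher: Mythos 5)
This theorem is quoted in the paper from \cite{EkhomHonaKalmancobordisms} and \cite{KarlssonCorOrientCobordisms} rather than proved: the paper's only added content is the observation that the Ekholm--Honda--K\'alm\'an arguments, written for $n=1$, extend word-by-word to arbitrary $n$, and that Karlsson's coherent orientations upgrade the count to $\Z[H_1(L)]$ coefficients. Your sketch (rigid disc counts with boundary on $L$ weighted by boundary homology classes, SFT compactness/gluing for $\varepsilon_L\circ\partial=0$, parametric moduli for the DGA homotopy, with the orientation bookkeeping flagged as the delicate point) is precisely the standard argument of those references, so it is correct in outline and takes essentially the same route the paper relies on.
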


All Legendrian submanifolds that we consider will have Maslov number $0$.
All the fillings we consider in this paper will have Maslov number $0$. In addition, as we will see for all Legendrian submanifolds $\Lambda$ in this paper Chekanov-Eliashberg algebra $\cA(\Lambda)$ will be supported entirely
in non-negative degree.

Following \cite{CasalsNginfinitenumbeoffillings} observe that in these settings, two DGA maps $$(\cA(\Lambda), \Z[H_1(L)])\to \Z[H_1(L)]$$ are DGA
homotopic if and only if they are equal, and hence if two fillings $L$, $L'$ produce augmentations to
$\Z[H_1(L)]$ that are distinct (under all isomorphisms identifying $H_1(L)$ and $H_1(L'))$, then $L$,$L'$ are not Hamiltonian isotopic.

\subsection{The spherical spinning construction}\label{sec:front-spinn-constr}
The front spinning construction has been introduced by Ekholm, Etnyre and Sullivan in \cite{EkholmEtnyreSullivannoniso}. From a given  Legendrian submanifold $\Lambda \subset \R^{2n+1}_{st}$ it produces  the Legendrian embedding of $\Lambda \times S^1$ inside $\R^{2n+3}_{st}$. This construction admits several extensions, see \cite{BourgeoisSabloffTraynorLagcobgenfam, DRG2021, EkholmKalmanisoLegknto, LambertColeLegPr, RutherfordSulivanfunctLCH, SabloffSullivanfamLegsub}. In this paper we will consider the spherical spinning construction described by the author in \cite{GolovkoSphericalSpinning}  which produces a Legendrian embedding of $\Lambda \times S^m$ inside $\R^{2(n+m)+1}_{st}$.
Spherical spinning, as shown in \cite{GolovkoSphericalSpinning}, can be extended to exact Lagrangian cobordisms. Now we recall the details of these constructions.

Consider the following embedding $\R \times S^n \hookrightarrow \R^{n+1}$ given by $(s, p) \mapsto e^s p$. Observe that it
induces an embedding $$\R^n \times S^m = \R^{n-1} \times \R \times S^m \hookrightarrow \R^{n+m}$$ which admits a canonical extension to an embedding
$$\R^{2n} \times T^*S^m = T^*\R^n \times T^*S^m \hookrightarrow T^*\R^{n+m}=\R^{2(n+m)}$$
preserving the the tautological one-forms.

Then we take a product $\Lambda\times 0_{S^m}$, where $0_{S^m}$ denotes the zero section of  $T^*S^m$. $\Lambda\times 0_{S^m}$ becomes  a Legendrian submanifold
of the contactization of $\R^{2n}\times T^{\ast}S^{m}$, and hence provides an embedding of $\Lambda \times S^m$ into the contactization of $\R^{2(n+m)}$.
This Legendrian embedding of $\Lambda \times S^m$ into $\R^{2(n+m)+1}_{st}$  is called the $S^m$-spun of $\Lambda$ (or just the spherical spun of $\Lambda$) and will be denoted by
$\Sigma_{S^m}\Lambda$.

We then observe that $S(\R^{2n+1}_{st})$  is symplectomorphic to the standard symplectic vector space $\R^{2(n+1)}$. For an exact Lagrangian cobordism $L$ from $\Lambda^-$ to $\Lambda^+$ in $S(\R^{2n+1}_{st})$, the image of the exact Lagrangian submanifold $L \times 0_{S^m} \subset \R^{2(n+1)} \times T^*S^m$ under the above embedding can be seen as an exact Lagrangian cobordism from $\Sigma_{S^m}\Lambda^-$ to $\Sigma_{S^m}\Lambda^+$ inside $S(\R^{2(n+m)+1}_{st})$.
This exact Lagrangian cobordism from $\Sigma_{S^m}\Lambda^-$ to $\Sigma_{S^m}\Lambda^+$ is diffeomorphic to $L \times S^m$ and is called the $S^m$-spun of $L$ (or just the spherical spun of $L$) and will be denoted by $\Sigma_{S^m}L$.  For more details we refer the reader to \cite{GolovkoSphericalSpinning} (where we discuss the embedding which defines $\Sigma_{S^m} L$) and \cite[Section 2]{ChDRGhG}.

\section{Proof of Theorem
\ref{thminfinitenumberoffillings}}

We consider the class of Legendrian links described by Casals and Ng in \cite{CasalsNginfinitenumbeoffillings}  with the property that the Chekanov-Eliashberg algebra detects infinitely many exact Lagrangian fillings. We denote this class by $\cH$.

As observed by Casals and Ng in \cite{CasalsNginfinitenumbeoffillings} each element $\Lambda\in \cH$ has rotation number
$0$ on each component, and all of the fillings that Casals and Ng have constructed are composed of minimum
cobordisms and saddle cobordisms at Reeb chords with degree $0$. It follows that each of
these fillings has Maslov number $0$.
In addition, for all $\Lambda\in\cH$ all  Reeb chords lie in nonnegative degree.
In these settings,  as observed by Casals and Ng \cite{CasalsNginfinitenumbeoffillings} two augmentations of  $\cA(\Lambda)$ are DGA homotopic if and only if they are equal.  Hence,  if two of the constructed diffeomorphic fillings $L$ and  $L'$ induce augmentations of $\cA(\Lambda)$ into  $\Z[H_1(L)]$ that are distinct under all isomorphisms identifying $H_1(L)$ and $H_1(L')$, then $L$ and $L'$ are not Hamiltonian isotopic.

Now we take $\Lambda \in \cH$ and consider $\Sigma_{S^k}\Lambda$, where $k\geq 2$.

\begin{remark}
Since $\Sigma_{S^k} \Lambda$  is diffeomorphic to $\Lambda\times S^k$ and $\Sigma_{S^k} L$ is diffeomorphic to $L\times S^k$, $k\geq 2$,
 by K\"{u}nneth formula we get that $H_1(\Sigma_{S^k} \Lambda)\simeq H_1(\Lambda)$,  $H_1(\Sigma_{S^k} L)\simeq H_1(L)$, so  $\Z[H_1(\Sigma_{S^k} \Lambda)]\simeq \Z[H_1(\Lambda)]$ and $\Z[H_1(\Sigma_{S^k} L)]\simeq \Z[H_1(L)]$.
\end{remark}

Following the discussion in \cite[Section 3]{EstimatingReebChordsCharacteristicAlgebra} we choose the perturbation of  $\Sigma_{S^k} \Lambda$, we will still call it $\Sigma_{S^k} \Lambda$, for which there is a decomposition of Reeb chords
$\mathcal{Q}(\Sigma_{S^k} \Lambda)=\mathcal{Q}_N \sqcup \mathcal{Q}_S$, where there is a canonical bijection between
$\mathcal{Q}_S \simeq \mathcal{Q}(\Lambda)$ which preserves the index of the chords, and there is also a canonical bijection $\mathcal{Q}_N \simeq \mathcal{Q}(\Lambda)$, which increases the grading by $k$.
Following the proof of \cite[Theorem 3.1]{EstimatingReebChordsCharacteristicAlgebra} we note that grading-preserving
bijection between the Reeb chords $\mathcal{Q}_S$  and the
Reeb chords on $\Lambda$ leads to the DGA inclusion
\begin{align}
\label{inclDGA}
i: \mathcal{A}(\Lambda) \hookrightarrow \mathcal{A}(\Sigma_{S^k} \Lambda),
\end{align}
which can be left inverted by the DGA map
$$\pi: \mathcal{A}(\Sigma_{S^k} \Lambda) \to \mathcal{A}(\Sigma_{S^k} \Lambda) /\langle \mathcal{Q}_N\rangle =\mathcal{A}(\Lambda) $$
which is given by taking the quotient of $\mathcal A(\Sigma_{S^k} \Lambda)$ with the two-sided ideal generated by $\mathcal{Q}_N$.

Then to each element $c\in \cQ(\Lambda)$ we associate two elements $c_S\in \mathcal{Q}_S$ and $c_N\in \mathcal{Q}_N$ obtained by the canonical bijections described above.

From the fact that the Chekanov-Eliashberg algebra $\mathcal A(\Lambda)$ is supported entirely in nonnegative degree
and the description of Reeb chords of the perturbed $\Sigma_{S^k} \Lambda$ it follows that $\cA(\Sigma_{S^k} \Lambda)$ is supported entirely in nonnegative degree. In addition, since $\Lambda\in \cH$, from the construction of $\Sigma_{S^k}\Lambda$ it follows that the Maslov number of $\Sigma_{S^k}\Lambda$ is zero.

\begin{remark}
\label{rem:one-to-one_aug}
Let $Aug(\Lambda)$ and $Aug(\Sigma_{S^k}\Lambda)$ denote the sets of graded augmentations of $\cA(\Lambda)$ and $\cA(\Sigma_{S^k}\Lambda)$, respectively. Then  it follows
that for every element $c \in \cQ(\Lambda)$ of degree $0$, $i(c)=c_S\in  \mathcal{Q}_S$  and $\pi(c_S)=c$. From this, together with the fact that all Reeb chords in $\mathcal{Q}_N$ are of non-zero degree, we see that $i^{\ast}: Aug(\Sigma_{S^k}\Lambda)\to Aug(\Lambda)$
and $\pi^{\ast}: Aug(\Lambda)\to Aug(\Sigma_{S^k}\Lambda)$ are inverse maps and hence there is a one-to-one correspondence between graded augmentations  of $\cA(\Lambda)$ and $\cA(\Sigma_{S^k}\Lambda)$, which to a graded augmentation $\varepsilon$ on $\cA(\Lambda)$ associates a graded augmentation $\tilde{\varepsilon}$ of $\cA(\Sigma_{S^k}\Lambda)$
defined by $\tilde{\varepsilon}(c_S)=\varepsilon(c)$,  $\tilde{\varepsilon}(c_N)=0$ for $c\in \cQ(\Lambda)$.
\end{remark}

Let $\cL$ denote the infinite set of exact Lagrangian fillings of $\Lambda$ with Maslov number zero which are diffeomorphic, but not Hamiltonian isotopic constructed by Casals and Ng.
First, we observe that by the discussion in \cite{GolovkoSphericalSpinning} if $L$ is an exact Lagrangian filling of $\Lambda$, then $\Sigma_{S^k} L$ is an exact Lagrangian filling of $\Sigma_{S^k} \Lambda$ diffeomorphic to $L\times S^k$. In addition, since the Maslov number of $L$ is zero, the Maslov number of $\Sigma_{S^k} L$ is zero.

We define
$$\Sigma_{S^k} \cL:=\{ \Sigma_{S^k} L \ |\  L\in \cL \}.$$
Since all elements of $\cL$ are pairwise diffeomorphic, all elements of $\Sigma_{S^k} \cL$ are pairwise diffeomorphic.

It remains to show that all elements of $\Sigma_{S^k} \cL$ are pairwise Hamiltonian non-isotopic.
Let $L$ and $L'$ be two different elements of $\cL$. We take $\Sigma_{S^k} L$ and $\Sigma_{S^k} L'$ and we want to show that  $\Sigma_{S^k} L$ and $\Sigma_{S^k} L'$ are not Hamiltonian isotopic. We argue by contradiction, i.e. we assume that $\Sigma_{S^k} L$ and $\Sigma_{S^k} L'$ are Hamiltonian isotopic.

Following Casals and Ng \cite{CasalsNginfinitenumbeoffillings} we observe that since all Reeb chords of $\Sigma_{S^k} \Lambda$ have nonnegative degrees, and $\Sigma_{S^k}\cL$ consists of fillings with Maslov number zero of a Maslov number zero Legendrian $\Sigma_{S^k}\Lambda$, if  two augmentations $$\varepsilon_{\Sigma_{S^k} L},\varepsilon'_{\Sigma_{S^k} L'}: (\cA(\Sigma_{S^k} \Lambda), \Z[H_1(\Sigma_{S^k} L)])\to \Z[H_1(\Sigma_{S^k} L)]$$ induced by exact Lagrangian fillings $\Sigma_{S^k} L, \Sigma_{S^k} L'\in \Sigma_{S^k}\cL$ are distinct under all isomomorphisms identifying $H_1(\Sigma_{S^k} L)$ and $H_1(\Sigma_{S^k} L')$, then the fillings $\Sigma_{S^k} L, \Sigma_{S^k} L'$  are not Hamiltonian isotopic.

Hence, since we assume that  $\Sigma_{S^k} L$ and $\Sigma_{S^k} L'$ are Hamiltonian isotopic, there is an isomorphism $\phi:H_1(\Sigma_{S^k} L)\to H_1(\Sigma_{S^k} L')$ which induces an isomorphism $\varphi: \Z [H_1(\Sigma_{S^k} L)] \to \Z [H_1(\Sigma_{S^k} L')]$
such that
\begin{align}
\label{spuneqaugm}
\varphi\circ \varepsilon_{\Sigma_{S^k} L} = \varepsilon_{\Sigma_{S^k} L'}.
 \end{align}

From Remark \ref{rem:one-to-one_aug} it follows that there is a one-to-one correspondence between $Aug(\Lambda)$ and $Aug(\Sigma_{S^k}\Lambda)$ given by $i^{\ast}(\tilde{\varepsilon})=\varepsilon$, $\pi^{\ast}(\varepsilon)=\tilde{\varepsilon}$ so that $\tilde{\varepsilon}(c_S)=\varepsilon(c)$, $\tilde{\varepsilon}(c_N)=0$ for $c\in \cQ(\Lambda)$.

Then we need the following property of augmentations induced by exact Lagrangian fillings:
\begin{lem}
\label{rem:inducedaugmentationgeometric}
If $\varepsilon$ is the augmentation induced by an exact Lagrangian filling $L\in \cL$ of $\Lambda$, then $\tilde{\varepsilon}$ is the augmentation induced by the exact Lagrangian filling $\Sigma_{S^k} L\in \Sigma_{S^k} \cL$ of $\Sigma_{S^k} \Lambda$.
\end{lem}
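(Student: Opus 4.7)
My plan is to verify $\varepsilon_{\Sigma_{S^k}L} = \tilde{\varepsilon}$ generator by generator on $\cA(\Sigma_{S^k}\Lambda)$, using the decomposition $\cQ(\Sigma_{S^k}\Lambda) = \cQ_S \sqcup \cQ_N$ recorded above. Augmentations take values in a ring concentrated in grading $0$, so both $\varepsilon_{\Sigma_{S^k}L}$ and $\tilde{\varepsilon}$ automatically vanish on any Reeb chord of nonzero degree. Since the canonical bijection $\cQ_N \simeq \cQ(\Lambda)$ raises grading by $k \geq 2$ and all chords of $\Lambda$ sit in nonnegative degree, every element of $\cQ_N$ has degree $\geq k > 0$, so $\varepsilon_{\Sigma_{S^k}L}(c_N) = 0 = \tilde{\varepsilon}(c_N)$ for free. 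Consequently the whole content of the lemma reduces to verifying
$$\varepsilon_{\Sigma_{S^k}L}(c_S) = \varepsilon_L(c)$$
for every degree-zero $c \in \cQ(\Lambda)$, where both sides are compared via the canonical K\"{u}nneth identification $\Z[H_1(\Sigma_{S^k}L)] \simeq \Z[H_1(L)]$, which is available because $k \geq 2$.

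To establish this equality I would unfold the geometric definition: $\varepsilon_{\Sigma_{S^k}L}(c_S)$ is the signed, $H_1$-weighted count of rigid pseudoholomorphic disks in the symplectization $S(\R^{2(n+k)+1}_{st})$ with boundary on $\Sigma_{S^k}L$ and a single positive puncture asymptotic to $c_S$. I would choose a cylindrical almost complex structure together with a small Morse-type perturbation of $\Sigma_{S^k}\Lambda$ and $\Sigma_{S^k}L$ compatible with the product structure arising from the spinning construction, following the perturbation scheme of \cite{EstimatingReebChordsCharacteristicAlgebra} on the Legendrian side and \cite{GolovkoSphericalSpinning} on the cobordism side. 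Under such a choice every rigid disk with positive puncture at the ``southern'' chord $c_S$ is forced to lie in the slice $L \times \{\text{south pole}\} \subset L \times S^k = \Sigma_{S^k}L$, and the resulting moduli-space bijection identifies such disks with rigid disks on $L$ carrying positive puncture at $c$; the $H_1$-coefficients match because the inclusion $L \hookrightarrow L \times S^k$ induces an isomorphism on $H_1$ for $k \geq 2$. Summing gives $\varepsilon_{\Sigma_{S^k}L}(c_S) = \varepsilon_L(c)$.

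The main obstacle is making this moduli-space correspondence rigorous. One needs (i) a perturbation that is simultaneously transverse and sufficiently symmetric to confine every rigid disk with positive puncture in $\cQ_S$ to the southern slice, and (ii) a ruling-out of exotic rigid disks that are not of product type. Both points have essentially been treated for Reeb chord counts in \cite{EstimatingReebChordsCharacteristicAlgebra}, and the cobordism-level compatibility of spherical spinning was set up in \cite{GolovkoSphericalSpinning}; the remaining verification is that these analytical ingredients combine cleanly with the $\Z[H_1]$-coefficient bookkeeping for the augmentation induced by the filling $\Sigma_{S^k}L$. Granting this, the $\cQ_S$-identity together with the degree argument for $\cQ_N$ yields $\varepsilon_{\Sigma_{S^k}L} = \tilde{\varepsilon}$ on generators, and hence as DGA maps, which is the claim of the lemma.
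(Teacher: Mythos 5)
Your reduction is fine as far as it goes: since $\Z[H_1(\Sigma_{S^k}L)]$ sits in degree $0$ and every chord in $\cQ_N$ has degree $\geq k>0$, both augmentations vanish on $\cQ_N$, so the lemma indeed reduces to the identity $\varepsilon_{\Sigma_{S^k}L}(c_S)=\varepsilon_L(c)$ on degree-zero chords, with $H_1$-coefficients matched by the K\"unneth isomorphism. The problem is that this identity is precisely the hard part, and your argument for it is only a sketch whose key step you explicitly ``grant'': that for a suitable perturbation every rigid disk in the symplectization with boundary on $\Sigma_{S^k}L$ and positive puncture at $c_S$ is confined to a slice $L\times\{\mathrm{pt}\}$, with transversality preserved, and that no non-product rigid disks contribute. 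Neither of the sources you invoke supplies this. The perturbation scheme of \cite{EstimatingReebChordsCharacteristicAlgebra} concerns the Legendrian DGA of the spun (Reeb chords and front-projection/flow-tree data for $\Sigma_{S^k}\Lambda$), not moduli of punctured disks with boundary on an exact Lagrangian filling of the spun in the symplectization; and \cite{GolovkoSphericalSpinning} shows that $\Sigma_{S^k}L$ is again an exact Lagrangian cobordism, but says nothing about the DGA map or augmentation it induces. Confinement-plus-transversality for product-type Lagrangian boundary conditions is exactly the kind of statement that needs an argument (a product almost complex structure need not be regular, and symmetry and transversality pull in opposite directions), so as written there is a genuine gap at the heart of the proof.

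The paper avoids this analytic issue by never counting disks on the filling directly. Following \cite{EkhomHonaKalmancobordisms} and \cite{PanRutherfordFunctLCH}, the filling $L$ is converted into a conical Legendrian cobordism $\hat L\subset J^1(\R_{>0}\times\R)$ and then into a Morse cobordism $\overline{L}_0$, whose Chekanov--Eliashberg differential is computed by rigid gradient flow trees and satisfies $d(\hat x)=\varepsilon_L(x)+n(d(\hat x))$ (\cite[Formula (7.1)]{PanRutherfordFunctLCH}); the induced augmentation is thus read off from the constant term of a Legendrian differential. One then checks that spinning commutes with this construction, $\Sigma_{S^k}\overline{L}_0=\overline{\Sigma_{S^k}L}_0$, that the flow-tree/disk correspondence of \cite{EkholmMorsefltrees2007} still applies because the spun fronts have only cusp-edge and swallowtail singularities (this is where the low dimension of $\Lambda$ and $L$ enters), and that the DGA inclusion $\cA(\overline{L}_0)\hookrightarrow\cA(\Sigma_{S^k}\overline{L}_0)$ forces $\varepsilon_L(x)=\varepsilon_{\Sigma_{S^k}L}(x_S)$. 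If you want to keep your more direct symplectization approach, you would have to prove the confinement and transversality statements yourself; otherwise the Pan--Rutherford route is the one for which the needed technology actually exists in the literature.
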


\begin{proof}
From the discussion in \cite[Section 2]{EkhomHonaKalmancobordisms} and \cite[Section 4]{PanRutherfordFunctLCH} it follows that
exact Lagrangian filling $L$ leads to the conical Legendrian cobordism $\hat{L}\subset J^1(\R_{>0}\times \R)$.

Then we discuss what is $\hat{L}$.
Recall that $S(\R^{3}_{st})\times \R_w$ is contactomorphic to $J^1(\R_{>0}\times \R)$ through the following contactomorphism:
\begin{equation} \label{eq:contactomorphism}
\begin{array}{rrcl}
\Psi:& \ S(\R^{3}_{st})\times \R_w& \to & J^1(\R_{> 0}\times \R)\\
& (t, x, y, z, w) & \to &\big( (e^t, x), (z, e^t y), e^tz+w\big).\\
\end{array}
\end{equation}
The Legendrian lift  of $L$ that we denote by $L^{\mathit{leg}}$ can then be mapped to a Legendrian $\Psi(L^{\mathit{leg}})$ in $J^1( \R_{>0}\times \R)$; denote it by $\hat{L}$.
For $t>T$, $L$ is cylindrical over $\Lambda$, and the primitive $f$ is a constant, say $f=A$.
Hence on $J^1((e^{T}, \infty)\times \R)$, $\hat{L}$ can be parametrized by
\begin{equation}\label{eq:para}
(s, \ x(\theta),\  z(\theta),\  sy(\theta), \ sz(\theta)-A)
\end{equation}
where $s=e^t$ and $\theta$ parametrizes $\Lambda$ in $\R^{3}_{st}$ through $(x(\theta), y(\theta), z(\theta))$.
That is, using the notation from \cite[Section 4.1.1]{PanRutherfordFunctLCH}  $\hat{L}$ agrees with $j^1(s\cdot \Lambda - A)$
 when $s>e^T$. In addition, clearly
$\hat{L} \cap J^1((0,e^{T}] \times \R)$ is compact.

Now to the conical Legendrian cobordism $\hat{L}$ we associate the Morse cobordism $\overline{L}_0$ obtained by modifying
$\hat{L}$ near its positive cylindrical ends.
$\overline{L}_0$ will have a Morse maximum at $s = s_M$ followed
by a Morse minimum end at $s = s_m$.
The precise construction of it goes the following way:
Let $s_m, s_M>0$ be such that there are small positive $\epsilon, \epsilon'$ satisfying
$s_M-3\epsilon>e^T$, $s_m-3\epsilon'>s_M$.
Then we define a positive Morse function $h:\R_{>0}\to \R$ such that
\item  $$h(s) =\begin{cases} s & \mbox{ if } s<s_M-2\epsilon,\\
B_M-(s-s_M)^2 & \mbox{ if } s_M-\epsilon<s<s_M+\epsilon',\\
B_m+(s-s_m)^2 & \mbox{ if } s>s_m-\epsilon',
\end{cases}$$
where $B_m, B_M>0$ are  some constants; $h>0$ on $(0, s_{M}+\epsilon)$, $h'> 0$ on $(0,s_M)$,
$h(s)$ has a unique local maximum at $s_M$, a unique local minimum at $s_m$, and no other critical points.
Define the Morse cobordism $\overline{L}_0$ to be the Legendrian surface in $J^1((0, s_M+\epsilon)\times \R)$ such that it
agrees with $\hat{L}$ in $J^1((0, e^T]\times \R)$,
and it agrees with $j^1( h(s) \cdot \Lambda-A)$ in $J^1([e^T, s_M+\epsilon)\times \R)$.

The schematic picture of it appears on Figure\ref{fig:MorseCobordisms}. From \cite[Lemma 7.1]{PanRutherfordFunctLCH}
it follows that  the set of Reeb chords of $\overline{L}_0$ has the form $\cQ(\overline{L}_0)=\cQ(\overline{L}_0)_m\cup \cQ(\overline{L}_0)_M$. $\cQ(\overline{L}_0)_m$ is in canonical grading-preserving one to one correspondence with the set of Reeb chords of $\Lambda$, $\cQ(\overline{L}_0)_M$ is in canonical one to one correspondence with the set of Reeb chords of $\Lambda$ with the grading shift by $1$, i.e. $|\hat{x}|=|x|+1$ for $\hat{x}\in \cQ(\overline{L}_0)_M$ and the corresponding $x\in \cQ(\Lambda)$.

\begin{figure}[ht]
\labellist
\pinlabel $e^T$ at 235 553
\pinlabel $s_{+}$ at 258 553
\pinlabel $s$ at 285 553
\pinlabel $e^T$ at 410 553
\pinlabel $s_M$ at 433 553
\pinlabel $s_m$ at 474 553
\pinlabel $s$ at 510 553
\endlabellist
  \centering
  \includegraphics[height=4.4cm]{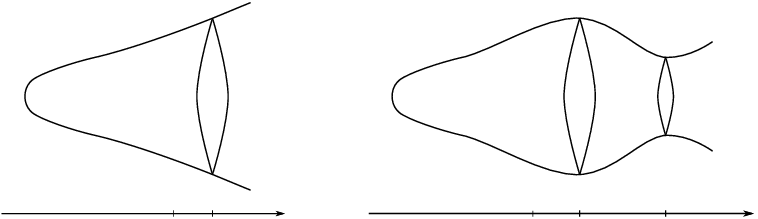}
  \vspace{5mm}
  \caption{Schematic pictures of $\hat{L}$ (left) and $\overline{L}_0$ (right)}
  \label{fig:MorseCobordisms}
\end{figure}

One defines the Chekanov-Eliashberg  DGA $\cA(\overline{L}_0)$
whose differential counts rigid gradient flow trees,
see\cite{PanRutherfordFunctLCH}. Recall that according to \cite{EkholmMorsefltrees2007}, counting gradient flow trees is an alternative way of defining differential of a Chekanov-Eliashberg algebra.
From \cite[Formula (7.1)]{PanRutherfordFunctLCH} it follows that
\begin{align}
\label{dbefspinn}
d(\hat{x})=\varepsilon_{L}(x)+n(d(\hat{x})),
\end{align}
where $n(d(\hat{x}))$ denotes the non-constant part of $d(\hat{x})$.

\begin{remark}
\label{coefficientsdiff}
Note that since Pan and Rutherford considered a class of all $1$-dimensional Legendrian submanifods in $J^1(M)$, the coefficients ring of the Chekanov-Eliashberg algebra used in the work of Pan and Rutherford \cite{PanRutherfordFunctLCH} is $\Z_2$, but for the Legendrian submanifolds and exact Lagrangian cobordisms that we consider the  proof of \cite[Formula (7.1)]{PanRutherfordFunctLCH} works the same way for $\Z[H_{1}(L)]$ coefficients ring.
\end{remark}

Then we apply the spherical spinning construction to $\overline{L}_0$ and get $\Sigma_{S^k} \overline{L}_0$. In addition, we consider exact Lagrangian cobordism  $\Sigma_{S^k} L$ and the corresponding Legendrian $\overline{\Sigma_{S^k} L}_0$.
Note that  $\Sigma_{S^k} \overline{L}_0$ coincides with $\overline{\Sigma_{S^k} L}_0$. This follows from the fact that contactomorphism \ref{eq:contactomorphism} preserves $x$-coordinate with respect to which we perform the spherical spinning construction in front projection, see \cite[Section 2]{GolovkoSphericalSpinning}, and in addition $s$-coordinate is not affected by the spherical spinning construction.
\begin{remark}
\label{cuspendgespun}
Following Ekholm \cite{EkholmMorsefltrees2007} observe that for two-dimensional Legendrians generically the only types of singularities of fronts are cusp-edges of codimension $1$ and swallow-tails of codimension $2$. Since we start with $1$-dimensional Legendrian submanifolds and their $2$-dimensional exact Lagrangian fillings to which we apply the spherical spinning construction, all singularities of the fronts that we have for spuns are as in the two-dimensional case. Therefore the flow trees result  of Ekholm \cite{EkholmMorsefltrees2007} relating flow trees and pseudoholomorphic disks in the definition of Chekanov-Eliashberg algebra is applicable for spuns that we consider.
\end{remark}

\begin{remark}
\label{formcounthd}
Following the proofs of Pan and Rutherford of \cite[Lemma 7.1]{PanRutherfordFunctLCH}  and \cite[Formula (7.1)]{PanRutherfordFunctLCH} that have been originally written for $1$-dimensional Legendrians and $2$-dimensional cobordisms, we note that the proofs remain correct in high dimensions in case when
the correspondence between flow trees and pseudoholomorphic disks proven by Ekholm in \cite{EkholmMorsefltrees2007} holds. Hence, using Remarks \ref{coefficientsdiff}, \ref{cuspendgespun},    we observe that the
analogue of Formula \ref{dbefspinn} holds for $\overline{\Sigma_{S^k} L}_0$.
\end{remark}

Again, using the discussion in  \cite[Section 3]{EstimatingReebChordsCharacteristicAlgebra} we have
an inclusion
\begin{align}
\label{inclofthsp}
i:\cA(\overline{L}_0)\hookrightarrow \cA(\Sigma_{S^k}\overline{L}_0).
\end{align}
The set of Reeb chords $\cQ(\Sigma_{S^k} \overline{L}_0)$ decomposes as $\cQ_N(\Sigma_{S^k} \overline{L}_0) \sqcup \cQ_S(\Sigma_{S^k} \overline{L}_0)$, where there is a canonical bijection between
$\mathcal{Q}_S(\Sigma_{S^k} \overline{L}_0) \simeq \mathcal{Q}(\overline{L}_0)$ which preserves the index of the chords, and there is also a canonical bijection $\mathcal{Q}_N(\Sigma_{S^k} \overline{L}_0) \simeq \mathcal{Q}(\overline{L}_0)$, which increases the grading by $k$. Reeb chords of
$\mathcal{Q}_S(\Sigma_{S^k} \overline{L}_0)$ will be denoted  by $x_S$
and Reeb chords of
$\mathcal{Q}_N(\Sigma_{S^k} \overline{L}_0)$ will be denoted by $x_N$.
Then using Remark \ref{formcounthd} we see that
\begin{align}
\label{daftspunce}
d(\hat{x}_S)=\varepsilon_{\Sigma_{S^k}  L}(x_S)+n(d(\hat{x}_S)).
\end{align}
From Formulas \ref{dbefspinn}, \ref{daftspunce} and the fact that there exists an inclusion \ref{inclofthsp} which is a DGA map it follows that
$$\varepsilon_{L}(x)=\varepsilon_{\Sigma_{S^k}  L}(x_S).$$ Using this together with the fact that the only Reeb chords of grading $0$ of $\Sigma_{S^k} \Lambda$ are in $\cQ_S(\Lambda)$ implies that $\varepsilon_{\Sigma_{S^k}  L}=\tilde{\varepsilon}$, where $\varepsilon=\varepsilon_L$.
\end{proof}

Then we apply $i^{\ast}$ to the Equation \ref{spuneqaugm} and get
$$ \varphi\circ i^{\ast}(\varepsilon_{\Sigma_{S^k} L})=\varphi\circ\varepsilon_{\Sigma_{S^k} L}\circ i=i^{\ast}(\varphi\circ \varepsilon_{\Sigma_{S^k} L}) =  i^{\ast}(\varepsilon_{\Sigma_{S^k} L'})$$
which using Lemma \ref{rem:inducedaugmentationgeometric} transforms to
\begin{align}
\label{eq:chareqbefspun}
\varphi\circ \varepsilon_{L}= \varepsilon_{L'}.
\end{align}
Equation \ref{eq:chareqbefspun} contradicts the fact that $L$ and $L'$ are two different elements of $\mathcal L$.
Thus, $\Sigma_{S^k} L$ is not Hamiltonian isotopic to  $\Sigma_{S^k} L'$.

Then we apply the spherical spinning construction $\Sigma_{S^{l}}$, $l>2$, to $\Sigma_{S^k} \Lambda$, and using the same steps we see that $\Sigma_{S^l} \Sigma_{S^k} \Lambda$ admits an infinite number of  Hamiltonian non-isotopic exact Lagrangian fillings that are distinguished by the Chekanov-Eliashberg algebra. We repeat this process as many times as we want. Finally, we see that  $\Sigma_{S^{k_m}} \dots \Sigma_{S^{k_1}}\Lambda$, which is  diffeomorphic to the disjoint union of some number of $S^1\times S^{k_1}\times \dots \times S^{k_m}$,
admits an infinite number of exact Lagrangian fillings up to Hamiltonian isotopy. This finishes the proof of Theorem \ref{thminfinitenumberoffillings}.

\section*{Acknowledgements}
The author is deeply grateful to Georgios Dimitroglou Rizell,  Cecilia Karlsson, Lenny Ng,
Yu Pan, and the referees of the previous version of this paper for very helpful discussions. The author is supported by the GA\v{C}R EXPRO Grant 19-28628X.

{}

\end{document}